\theoremstyle{plain}
\newtheorem{theorem}{Theorem}
\newtheorem{corollary}[theorem]{Corollary}
\newtheorem{lemma}[theorem]{Lemma}
\newtheorem{proposition}[theorem]{Proposition}
\theoremstyle{definition}
\theoremstyle{remark}
\newtheorem{remark}[theorem]{Remark}
\DeclareMathOperator{\NN}{\mathbb{N}}
\begin{document}

\begin{center}
\vskip 1cm{\Large \bf Some Double Sums Involving Ratios of Binomial Coefficients Arising From Urn Models}
\vskip 1cm

David Stenlund \\
Mathematics and Statistics \\
\r{A}bo Akademi University \\
FI-20500 \r{A}bo \\
Finland \\
\href{mailto:david.stenlund@abo.fi}{\tt david.stenlund@abo.fi} \\
\ \\
James G.~Wan \\
Engineering Systems and Design \\
Singapore University of Technology and Design \\
8 Somapah Road, 487372 \\
Singapore \\
and \\
School of Mathematical and Physical Sciences \\
The University of Newcastle \\
University Drive, Callaghan NSW 2308 \\
Australia \\
\href{mailto:james_wan@sutd.edu.sg}{\tt james{\_}wan@sutd.edu.sg} \\
\end{center}

\vskip .2 in

\centerline{\bf Abstract}
In this paper we discuss a class of double sums involving ratios of binomial coefficients. The sums are of the form
\[ \sum_{j=0}^{n} \sum_{i=0}^j \frac{\binom{f_1(n)}{i}}{\binom{f_2(n)}{j}}\,c^{i-j}, \]
where $f_1, f_2$ are functions of $n$. Such sums appear in the analyses of the Mabinogion urn and the Ehrenfest urn in probability. Using hypergeometric functions, we are able to simplify these sums, and in some cases express them in terms of the harmonic numbers.

\section{Introduction}

Sum identities involving binomial coefficients frequently arise in combinatorics, number theory and probability.
There are well-known methods \cite{Gould1, Gould2, Summa, Wilf, Riordan} for evaluating single sums of a product of binomial coefficients, ranging from combinatorial interpretations \cite{sved} to generating functions \cite{gf}. Single sums over a reciprocal of a binomial coefficient have also been studied \cite{Belbachir, Borwein, Gould3, Sofo, Zhao}, where a standard technique is to express $\binom{n}{k}^{-1}$ as a Beta integral.

In this paper, we focus on \textit{finite double sums} whose summand is a \textit{ratio} of binomial coefficients. Such sums do not seem to occur extensively in the literature. Of particular interest are the two identities
\begin{align}
\sum_{j=0}^{n} \sum_{i=0}^j \frac{\binom{2n+2}{i}}{\binom{2n+1}{j}} & = (n+1) \sum_{k=0}^{n} \frac{1}{2k+1}, \label{Wansum} \\
\sum_{j=0}^{n} \sum_{i=0}^j \frac{\binom{2n+1}{i}}{\binom{2n}{j}} & = \Big(n+\frac{1}{2}\Big) \sum_{k=0}^{n} \frac{1}{2k+1} + \frac{2^{2n-1}}{\binom{2n}{n}}, \label{WansumB}
\end{align}
both of which are established in Corollary \ref{cor_main}. 

Our interest in these equations originates from studying a stochastic process known as the \textit{Mabinogion urn model} \cite[pp.~159--163]{Williams}. In this model, an urn contains some white and black balls; at each time step, a ball is drawn at random and its color noted; it is then returned to the urn while a ball of the opposite color (if there is any left) has its color switched. The first author studied the expected time to absorption of the process \cite{Stenlund}, and the treatment included solving the non-homogeneous recurrence relation
\begin{equation*}
X(k) = \frac{n-k}{n}\, X(k-1) + \frac{k}{n}\, X(k+1) + 1,
\end{equation*}
with boundary conditions $X(0)=X(n)=0$. The solution contains a double sum of binomial coefficient ratios, and in the special case when $n=2k$ it can be simplified using the result \eqref{WansumB}. The right hand side of  \eqref{WansumB}, for instance, facilitates the analysis of the asymptotic behaviour of the solution. 

Similar expressions are also found in the related \textit{Ehrenfest urn model} used in statistical mechanics \cite{Flajolet}. When starting with $2n+2$ black balls and $0$ white balls, the expected number of steps until there is the same number of balls of each color is given by \eqref{Wansum}. 

We note here that virtually all binomial identities, including the ones given in this paper, can be verified on a computer using creative telescoping, for instance with the Wilf-Zeilberger algorithm \cite[Ch.~6--7]{Wilf} and its extensions. Indeed, our first complete proof of the identity \eqref{Wansum} used the multivariate Celine's algorithm. However, we think that it is of interest to demonstrate a more self-contained and classical proof that can be followed step by step\,---\,especially since we applied the same method to discover and prove other identities as well. 

In Section \ref{sec_prelim}, we introduce the tools we use. In Section \ref{sec_hyper}, we present a hypergeometric proof of our main results, of which \eqref{Wansum} and \eqref{WansumB} are very special cases.  In Section \ref{sec_gen}, we take a closer look at some of our more interesting formulas and offer several extensions.

\section{Preliminaries}\label{sec_prelim}

The Gaussian \textit{hypergeometric function} ${}_2F_1$ \cite[Ch.~15]{Abram} \cite[Ch.~2 \& 3]{AAR} is defined as 
\begin{equation} \label{2f1def}
{}_2F_1(a,b;c;z) = \sum_{n=0}^\infty \frac{(a)_n (b)_n}{(c)_n} \frac{z^n}{n!},
\end{equation}
where $(\alpha)_n$ is the Pochhammer symbol given by $(\alpha)_n = \Gamma(\alpha+n)/\Gamma(\alpha)$, and $a,b,c$ are complex numbers. The series is convergent for all $|z|<1$, and for $|z|=1$ when $\Re(c-a-b)>0$; when $|z|>1$, the function is defined by analytic continuation. When $c \in \{0,-1,-2,\ldots\}$, the series is not defined, unless $a$ or $b \in \{0, -1, \ldots, c+1\}$. 

Similarly, the generalized hypergeometric function ${}_3F_2$ is defined as
\begin{equation} \label{3f2def}
{}_3F_2(a,b,c;d,e;z) = \sum_{n=0}^\infty \frac{(a)_n (b)_n (c)_n}{(d)_n (e)_n} \frac{z^n}{n!}.
\end{equation}
Hypergeometric functions provide a natural framework for analyzing binomial sums. Some useful results for  ${}_2F_1$'s include Pfaff's transformation,
\begin{equation} \label{euler}
_2F_1(a,b;c;z) = (1-z)^{-a}\,_2F_1\left(a,c-b;c;\tfrac{z}{z-1}\right),
\end{equation}
and Gauss' theorem,
\begin{equation} \label{gauss1}
_2F_1(a,b;c;1) = \frac{\Gamma(c)\,\Gamma(c-a-b)}{\Gamma(c-a)\,\Gamma(c-b)}.
\end{equation}
When $a,b \in \mathbb{C}\backslash\mathbb{Z}^-$, $\Re(b-a)>0$ and $m \in \mathbb{Z}$, equation \eqref{gauss1} gives
\begin{equation*}
\sum_{j=m}^\infty \frac{\Gamma(j+a)}{\Gamma(j+b+1)} = \frac{\Gamma(m+a)}{(b-a)\, \Gamma(m+b)},
\end{equation*}
from which we find, for any integer $n \ge m$, 
\begin{equation} \label{eq_ratio}
\sum_{j=m}^n \frac{\Gamma(j+a)}{\Gamma(j+b+1)} =  \frac{\Gamma(m+a)}{(b-a)\, \Gamma(m+b)} -  \frac{\Gamma(n+a+1)}{(b-a)\, \Gamma(n+b+1)}.
\end{equation}
When both sides are defined, equation \eqref{eq_ratio} also holds without the restrictions on $a$ and $b$  by analytic continuation.

Applying the definition \eqref{2f1def}, we may write
\[ \sum_{i=0}^{k} \binom{n}{i} x^i = x^k\binom{n}{k} {}_2F_1\left(1,-k;n+1-k;-\tfrac{1}{x}\right). \]
Using the transformation \eqref{euler}, the above formula becomes
\begin{equation}\label{eq_binomsum_hypergeom}
\sum_{i=0}^{k} \binom{n}{i} x^i = \frac{x^{k+1}}{x+1} \binom{n}{k} \,{}_2F_1\left(1,n+1;n+1-k;\tfrac{1}{x+1}\right).
\end{equation}
We make use of equations \eqref{eq_ratio} and \eqref{eq_binomsum_hypergeom} in the proofs below.

\section{Main results}\label{sec_hyper}

The theorem below illustrates the main techniques we use, and leads to identities such as \eqref{Wansum} and \eqref{WansumB}.
\begin{theorem}\label{thm_general}
Let $n \in \NN = \{0,1,2,3,\ldots\}$, $a, b \in \mathbb{C}$ and $b \notin \mathbb{Z}^-$. Then we have
\begin{equation} 
\frac{1}{2n+b+2}\sum_{j=0}^{n} \sum_{i=0}^j \frac{\binom{2n+a+2}{i}}{\binom{2n+b+1}{j}} = \sum_{k=0}^n \frac{1}{(k+1)\binom{2k+b+2}{k+1}}\bigg(\binom{2k+a}{k}+\frac{b \,\sum_{j=0}^{k-1} \binom{2k+a}{j}}{k+b+1} \bigg). \label{eq_general}
\end{equation}
\end{theorem}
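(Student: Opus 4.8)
The plan is to collapse the double sum to a single sum by disposing of the two summation variables with the two tools from Section~\ref{sec_prelim}. First I would evaluate the inner sum over $i$ using \eqref{eq_binomsum_hypergeom} with $x=1$, which rewrites $\sum_{i=0}^{j}\binom{2n+a+2}{i}$ as $\tfrac12\binom{2n+a+2}{j}\,{}_2F_1\big(1,2n+a+3;2n+a+3-j;\tfrac12\big)$. Expanding this $\,{}_2F_1$ as its defining series \eqref{2f1def} in a fresh index $m$ and pulling the $j$-independent factor $(2n+a+3)_m$ out front, the remaining sum over $j$ (which is finite, so the interchange is automatic) has summand $\binom{2n+a+2}{j}\big/\big(\binom{2n+b+1}{j}(2n+a+3-j)_m\big)$. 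After clearing the Pochhammer symbols this is a pure ratio of two Gamma functions in $j$, so the substitution $\ell=n-j$ brings it into exactly the shape required by \eqref{eq_ratio}.

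Applying \eqref{eq_ratio} evaluates the whole $j$-sum as a difference of two boundary terms, leaving a single series $\sum_{m\ge0}2^{-m}\,\Phi(m)$. The decisive simplification is that, after the factor $(2n+a+3)_m$ cancels against the Gamma functions produced by \eqref{eq_ratio}, the first boundary term carries a factor $(n+a+2+m)_{n+1}$, a polynomial of degree $n+1$ in $m$, while the second contributes only a term proportional to $\sum_{m\ge0}2^{-m}/(m+a-b)$. Dividing $(n+a+2+m)_{n+1}$ by the linear factor $m+a-b$ leaves remainder $(n+b+2)_{n+1}$, so the stray $\sum_{m\ge0}2^{-m}/(m+a-b)$ piece cancels \emph{exactly} against the second boundary term. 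This cancellation is why the answer is a finite sum rather than a Lerch-type transcendent, and it is the step I would verify most carefully, together with the convergence and the interchange, both of which are legitimate by analytic continuation in $a,b$ as in \eqref{eq_ratio}.

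What survives is $\sum_{m\ge0}2^{-m}q(m)$ with $q$ a polynomial of degree $n$, times an explicit ratio of Gamma functions; dividing by $2n+b+2$ as in \eqref{eq_general} reduces the prefactor to $\tfrac12\,\Gamma(n+b+2)/\Gamma(2n+b+3)$. The main obstacle is the final identification of this finite expression with the right-hand side of \eqref{eq_general}. Here I would use the Beta-function identity $1/\big((k+1)\binom{2k+b+2}{k+1}\big)=\Gamma(k+1)\Gamma(k+b+2)/\Gamma(2k+b+3)$ to anticipate which pieces to extract, expand $q(m)$ in a basis for which the geometric-type sums $\sum_{m\ge0}2^{-m}\binom{m+\cdot}{\cdot}$ reproduce the reciprocal central binomials, and then separate the result by a partial-fraction (contiguous) step in the parameter $b$. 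That split is precisely what isolates the diagonal contribution $\binom{2k+a}{k}$ from the below-diagonal partial sum $\sum_{j=0}^{k-1}\binom{2k+a}{j}$ and accounts for the factor $b/(k+b+1)$. I would close by checking the base case $n=0$ directly, where both sides of \eqref{eq_general} equal $1/(b+2)$.
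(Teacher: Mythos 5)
Your opening is sound, and in fact it coincides with the paper's own computation: applying \eqref{eq_binomsum_hypergeom} at $x=1$, expanding the resulting ${}_2F_1$ via \eqref{2f1def}, interchanging the (finite) $j$-sum with the series, and evaluating the $j$-sum by \eqref{eq_ratio} is exactly how the paper arrives at its intermediate formula \eqref{eq_Aknew}. Your pole-cancellation observation is also correct and is the genuinely novel part of your plan: writing $\Gamma(2n+a+3+\ell)/\Gamma(n+a+2+\ell)=(n+a+2+\ell)_{n+1}$ and dividing this polynomial by $\ell+a-b$ leaves remainder $(n+b+2)_{n+1}=\Gamma(2n+b+3)/\Gamma(n+b+2)$, which after multiplication by the Gamma prefactor makes the coefficient of $\sum_{\ell\ge0}2^{-\ell-1}/(\ell+a-b)$ exactly $+1$, cancelling the second boundary term from \eqref{eq_ratio}. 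One is left with $A(n)=\frac{\Gamma(n+b+2)}{\Gamma(2n+b+3)}\sum_{\ell\ge0}q_n(\ell)\,2^{-\ell-1}$ with $\deg q_n=n$; I checked this at $n=1$, where $q_1(\ell)=\ell+a+b+7$ gives $A(1)=\frac{a+b+8}{(b+3)(b+4)}$, agreeing both with the double sum directly and with $B(0)+B(1)$. This does explain structurally why no Lerch-type transcendent survives, a point the paper instead obtains through telescoping.

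The genuine gap is your endgame. The right-hand side of \eqref{eq_general} is a sum over $k=0,\ldots,n$ whose $k$th term carries the $k$-dependent factor $\Gamma(k+1)\Gamma(k+b+2)/\Gamma(2k+b+3)$, whereas your closed form has the single prefactor $\Gamma(n+b+2)/\Gamma(2n+b+3)$; the instruction to ``expand $q(m)$ in a basis \ldots then separate by a partial-fraction (contiguous) step in $b$'' neither identifies the basis nor shows how the sum-over-$k$ structure, the diagonal term $\binom{2k+a}{k}$, and the factor $b/(k+b+1)$ are supposed to emerge from a single polynomial-times-geometric series. That identification is precisely the nontrivial half of the theorem, and as written it is an assertion, not a step. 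The paper avoids this difficulty by never evaluating $A(n)$ in closed form: it telescopes, $A(n)=A(0)+\sum_{k=1}^{n}\big(A(k)-A(k-1)\big)$, and in the difference the $\frac{1}{a-b+\ell}$ and $-1$ pieces of \eqref{eq_Aknew} cancel between consecutive values of $k$, leaving a combination of two ${}_2F_1\big(\cdot;\tfrac12\big)$ values that a single contiguous relation plus \eqref{eq_binomsum_hypergeom} converts into $B(k)$, with everything done at fixed $k$. If you wish to salvage your route, the realistic way to close it is to prove that the difference of your closed forms at $n$ and $n-1$ equals $B(n)$\,---\,but that re-introduces the telescoping and reduces to essentially the paper's computation, so until the matching step is carried out explicitly, your proposal is an incomplete proof rather than a complete alternative one. (Your base-case check $A(0)=1/(b+2)=B(0)$ is fine and matches the paper's.)
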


\begin{proof}
In this proof, we assume that $a$ is not a negative integer, and $a-b$ is not a negative integer or $0$. The final result \eqref{eq_general} however holds without these restrictions by analytic continuation. When $a$ is negative, any resulting binomial coefficients of the form $\binom{-m}{i}$ (with $m, i \in \mathbb{N}$) can be computed using 
\begin{equation} \label{binom_neg}
 \binom{-m}{i} = \frac{(-m)(-m-1)\cdots (-m-i+1)}{i!} = (-1)^i \binom{m+i-1}{i}.
\end{equation}
We denote the left hand side of equation \eqref{eq_general} by $A(n)$, and the summand on the right hand side of \eqref{eq_general} by $B(k)$. Using a telescoping sum, we have
\begin{equation}\label{eq_A_telescoping}
A(n) = A(0) + \sum_{k=1}^{n} \big( A(k) - A(k-1) \big).
\end{equation}
Also, $A(0) = \frac{1}{b+2} = B(0)$. Therefore, it is sufficient to prove that $A(k) - A(k-1) = B(k)$ for $k = 1, 2, 3, \ldots$. 

We start by using equation \eqref{eq_binomsum_hypergeom} with $x=1$ to express the inner sum of $A(k)$ as a $_2F_1$:
\begin{align*}
A(k) 
&= \sum_{j=0}^{k} \frac{\binom{2k+a+2}{j}}{2(2k+b+2)\binom{2k+b+1}{j}} \, _2F_1\left(1,2k+a+3;2k+a+3-j;\tfrac{1}{2}\right) \\
&= \sum_{j=0}^{k} \frac{\Gamma(2k+a+3)\Gamma(2k+b+2-j)}{2\, \Gamma(2k+b+3)\Gamma(2k+a+3-j)} \, _2F_1\left(1,2k+a+3;2k+a+3-j;\tfrac{1}{2}\right) \\
&= \sum_{j=k+1}^{2k+1} \frac{\Gamma(2k+a+3)\Gamma(b+1+j)}{2\, \Gamma(2k+b+3)\Gamma(a+2+j)}\, _2F_1\left(1,2k+a+3;a+2+j;\tfrac{1}{2}\right).
\end{align*}
Next, we expand the hypergeometric function as an infinite series using \eqref{2f1def}, and switch the order of summation:
\begin{align} \nonumber
A(k) & = \sum_{j=k+1}^{2k+1} \sum_{\ell=0}^\infty \frac{\Gamma(2k+a+3)\Gamma(b+1+j)}{2\, \Gamma(2k+b+3)\Gamma(a+2+j)} \frac{(2k+a+3)_\ell}{(a+2+j)_\ell \,2^\ell} \\ \nonumber
& = \sum_{\ell=0}^\infty \sum_{j=k+1}^{2k+1} \frac{\Gamma(2k+a+3+\ell)\Gamma(b+1+j)}{2^{\ell+1}\, \Gamma(2k+b+3)\Gamma(a+2+j+\ell)} \\
& =  \sum_{\ell=0}^\infty \frac{1}{(a-b+\ell)2^{\ell+1}}\bigg( \frac{\Gamma(2k+a+3+\ell)\Gamma(k+b+2)}{\Gamma(2k+b+3)\Gamma(k+a+2+\ell)}-1\bigg), \label{eq_Aknew}
\end{align}
where we have used equation \eqref{eq_ratio} to evaluate the $j$-sum.

Using equation \eqref{eq_Aknew} and the functional equation $\Gamma(\alpha+1)=\alpha\,\Gamma(\alpha)$, we obtain
\begin{align} \nonumber
& \ A(k)-A(k-1)   \\ \nonumber
= & \, \sum_{\ell=0}^\infty \frac{\ell(k+b+1)+(a+1)(b+1)+k(a+b+1)}{2^{\ell+1}} \frac{\Gamma(2k+a+1+\ell)\Gamma(k+b+1)}{\Gamma(2k+b+3)\Gamma(k+a+2+\ell)} \\ \nonumber
= & \,\frac{\Gamma(2k+a+1)\Gamma(k+b+1)}{\Gamma(2k+b+3)\Gamma(k+a+2)} \bigg(\frac{(2k+a+1)(k+b+1)}{4(k+a+2)} \, _2F_1\left(2,2k+a+2;k+a+3;\tfrac12\right)   \\ 
& \qquad + \frac{(a+1)(b+1)+k(a+b+1)}{2}\, _2F_1\left(1,2k+a+1;k+a+2;\tfrac12 \right)\bigg). \label{eq_needctg}
\end{align}
Now, from one of Gauss' contiguous relations \cite[Eq.~(2.5.8)]{AAR}, and the simple fact that ${}_2F_1(0,\beta;\gamma;z)=1$, we get
\begin{equation*}
{}_2F_1\left(2,\beta+1;\gamma+1;\tfrac{1}{2}\right) = \frac{2\gamma}{\beta}\Big((\beta-2\gamma+2)\,{}_2F_1\left(1,\beta;\gamma;\tfrac{1}{2}\right) + 2\gamma-2\Big).
\end{equation*}
Applying this formula to equation \eqref{eq_needctg}, we get
\begin{multline*}
A(k)-A(k-1) =\frac{\Gamma(2k+a+1)\Gamma(k+b+1)}{\Gamma(2k+b+3)\Gamma(k+a+2)}\Big((k+a+1)(k+b+1) \\ +\frac{k\,b}{2}\, _2F_1\left(1,2k+a+1;k+a+2;\tfrac12\right)\Big). 
\end{multline*}
The $_2F_1$ term can be converted back into a binomial sum using \eqref{eq_binomsum_hypergeom}. After some algebra, we verify that indeed $A(k) - A(k-1) = B(k)$, and therefore \eqref{eq_general} is true.
\end{proof}

Our next theorem expresses the sums under investigation in terms of hypergeometric functions.
\begin{theorem}\label{thm_general2}
Let $n \in \NN$, $a, b, c \in \mathbb{C}$ and $b \notin \mathbb{Z}^-$. Then we have
\begin{equation} 
\frac{1}{n+b+1}\sum_{j=0}^{n} \sum_{i=0}^j \frac{\binom{n+a+1}{i}}{\binom{n+b}{j}}\, c^{i-j} = \frac{1}{b+1}\sum_{k=0}^n \sum_{j=0}^k \frac{\binom{k+a}{j}}{\binom{k+b+1}{k}}\, c^{j-k}. \label{eq_general2a}
\end{equation}
When $a-b$ is not a negative integer and $c \notin \{-1,0\}$,
\begin{multline} 
\frac{(a-b)(c+1)}{(n+b+1)c}\sum_{j=0}^{n} \sum_{i=0}^j \frac{\binom{n+a+1}{i}}{\binom{n+b}{j}}\,c^{i-j} = \frac{\binom{n+a+1}{a}}{\binom{n+b+1}{b}}\,_3F_2\left(1,a-b,n+a+2;a-b+1,a+1;\tfrac{1}{c+1}\right) \\ -  {}_2F_1\left(1,a-b;a-b+1;\tfrac{1}{c+1}\right). \label{eq_general2b}
\end{multline}
When $a-b$ is a negative integer or $0$, and $c \notin \{-1,0\}$,
\begin{multline} 
\frac{(c+1)^{b-a+1}}{(n+b+1)c}\sum_{j=0}^{n} \sum_{i=0}^j \frac{\binom{n+a+1}{i}}{\binom{n+b}{j}} \,c^{i-j} = \frac{n+b+2}{(b+1)(c+1)}\,_3F_2\left(1,1,n+b+3;2,b+2;\tfrac{1}{c+1}\right) \\ + \psi(n+b+2)-\psi(b+1)+\log\frac{c}{c+1} - \sum_{\ell=1}^{b-a} \frac{(c+1)^\ell}{\ell}\bigg( \frac{\Gamma(n+b+2-\ell)\Gamma(b+1)}{\Gamma(n+b+2)\Gamma(b+1-\ell)}-1\bigg), \label{eq_general2c}
\end{multline}
where $\psi(\alpha) = \Gamma'(\alpha)/\Gamma(\alpha)$ is the \emph{digamma function}.
\end{theorem}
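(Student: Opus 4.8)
The plan is to reduce each double sum to a single hypergeometric series by means of \eqref{eq_binomsum_hypergeom} and the ratio evaluation \eqref{eq_ratio}. I would evaluate the right-hand double sum of \eqref{eq_general2a} in closed form (its inner sum is the simpler one), obtain the same closed form for the left-hand double sum, and thereby get \eqref{eq_general2a} and \eqref{eq_general2b} at once; then \eqref{eq_general2c} is the degenerate parameter range, handled by a modification of the same computation. Throughout I take $a-b\notin\{0,-1,-2,\dots\}$ and pass to the boundary by continuity, which is legitimate since both sides of each identity are analytic in $a$ and $c$ where defined.

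Write $S=\sum_{k=0}^n\sum_{j=0}^k\frac{\binom{k+a}{j}}{\binom{k+b+1}{k}}\,c^{j-k}$ for the double sum on the right of \eqref{eq_general2a} (so its right side is $S/(b+1)$). By \eqref{eq_binomsum_hypergeom}, $\sum_{j=0}^k\binom{k+a}{j}c^{j-k}=\frac{c}{c+1}\binom{k+a}{k}\,{}_2F_1(1,k+a+1;a+1;\tfrac{1}{c+1})$. Expanding this ${}_2F_1$ as a series in an index $\ell$, exchanging the $k$- and $\ell$-summations, and applying \eqref{eq_ratio} to the inner sum $\sum_{k=0}^n\Gamma(k+a+1+\ell)/\Gamma(k+b+2)$ (valid for every $\ell\ge0$ as $a-b+\ell\neq0$) leaves a single series in $\ell$ made of two families. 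In the family carrying $\Gamma(a+1+\ell)/\Gamma(b+1)$ the factor $1/(a+1)_\ell$ cancels $\Gamma(a+1+\ell)$, and with $\sum_\ell z^\ell/(a-b+\ell)=\tfrac{1}{a-b}\,{}_2F_1(1,a-b;a-b+1;z)$ this family sums to $-\frac{c(b+1)}{(c+1)(a-b)}\,{}_2F_1(1,a-b;a-b+1;\tfrac{1}{c+1})$. The family carrying $\Gamma(n+a+2+\ell)/\Gamma(n+b+2)$ has consecutive-term ratio $\frac{(\ell+a-b)(\ell+n+a+2)}{(\ell+a+1)(\ell+a-b+1)}\cdot\frac{1}{c+1}$, identifying it as a multiple of ${}_3F_2(1,a-b,n+a+2;a-b+1,a+1;\tfrac{1}{c+1})$; collecting the Gamma prefactors into $\binom{n+a+1}{a}/\binom{n+b+1}{b}$ gives $\frac{c(b+1)}{(c+1)(a-b)}\cdot\frac{\binom{n+a+1}{a}}{\binom{n+b+1}{b}}\,{}_3F_2(\dots)$. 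Hence $S=\frac{c(b+1)}{(c+1)(a-b)}\,\mathcal{B}$, where $\mathcal{B}$ is the bracket on the right of \eqref{eq_general2b}.

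The same reduction applies to the left-hand double sum $D=\sum_{j=0}^n\sum_{i=0}^j\frac{\binom{n+a+1}{i}}{\binom{n+b}{j}}\,c^{i-j}$: writing $\sum_{i=0}^j\binom{n+a+1}{i}c^{i-j}$ through \eqref{eq_binomsum_hypergeom}, expanding the resulting ${}_2F_1$, swapping the summations, reindexing $j\mapsto n-j$, and applying \eqref{eq_ratio} to $\sum_{k=0}^n\Gamma(k+b+1)/\Gamma(k+a+2+\ell)$ yields $D=\frac{(n+b+1)c}{(c+1)(a-b)}\,\mathcal{B}=\frac{n+b+1}{b+1}\,S$. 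The middle equality is exactly \eqref{eq_general2a}, and multiplying $D$ by $\frac{(a-b)(c+1)}{(n+b+1)c}$ turns its closed form into $\mathcal{B}$, i.e.\ \eqref{eq_general2b}. Since for fixed $n,b,c$ the two double sums of \eqref{eq_general2a} are rational in $a$, that identity extends from generic $a$ to all $a$; and \eqref{eq_general2b} extends to its boundary value $a-b=0$, where it reads $0=0$, by continuity.

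The main obstacle is \eqref{eq_general2c}, where $a-b=-m$ with $m=b-a\in\NN$, so that the index $\ell=m$ makes $a-b+\ell=0$: there \eqref{eq_ratio} is inapplicable, and the two families above (hence the ${}_2F_1$ and ${}_3F_2$ of \eqref{eq_general2b}) diverge individually while their combination stays finite. The plan is to isolate the term $\ell=m$ before using \eqref{eq_ratio}: its inner sum is the harmonic sum $\sum_{k=0}^n 1/(k+b+1)=\psi(n+b+2)-\psi(b+1)$, supplying the digamma terms. For $\ell\neq m$ I apply \eqref{eq_ratio} and substitute $q=\ell-m$; the ${}_3F_2$-family then splits into a $q\ge1$ tail that, after putting $a=b-m$, reassembles into $\frac{n+b+2}{(b+1)(c+1)}\,{}_3F_2(1,1,n+b+3;2,b+2;\tfrac{1}{c+1})$, and a $q<0$ part that becomes the Gamma-ratio finite sum, while the ${}_2F_1$-family contributes, through the geometric series $\sum_{p\ge1}(c+1)^{-p}/p=-\log\tfrac{c}{c+1}$, both the term $\log\frac{c}{c+1}$ and the remaining finite sum $\sum_{\ell=1}^{b-a}(c+1)^\ell/\ell$. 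The genuinely delicate step is the bookkeeping: confirming that the divergences at $\ell=m$ cancel and assembling the four surviving pieces with the correct signs and overall factor $(c+1)^{b-a+1}/c$ so as to match \eqref{eq_general2c}. One could instead derive \eqref{eq_general2c} as the limit $a\to b-m$ of \eqref{eq_general2b}, but that $0/0$ limit, in which the digammas and logarithm emerge from derivatives of the Gamma factors, is at least as delicate.
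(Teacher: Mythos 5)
Your proposal is correct, and the computations check out: expanding the inner sums via \eqref{eq_binomsum_hypergeom}, exchanging summations, and applying \eqref{eq_ratio} does reduce \emph{both} double sums to (multiples of) the very series the paper records as \eqref{eq_Ak2new}; your identification of the two resulting families with the $_3F_2$ and $_2F_1$ of \eqref{eq_general2b}, the $q\ge 1$ reindexing producing $\tfrac{n+b+2}{(b+1)(c+1)}\,{}_3F_2\left(1,1,n+b+3;2,b+2;\tfrac{1}{c+1}\right)$, the logarithm from $\sum_{q\ge 1}(c+1)^{-q}/q$, and the $-m\le q\le -1$ block giving the finite Gamma-ratio sum are all accurate. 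The toolkit is exactly the paper's, but your route differs in two genuine respects. First, the paper proves \eqref{eq_general2a} by the telescoping scheme of Theorem \ref{thm_general}: it derives \eqref{eq_Ak2new} for the left-hand side only and then computes $\widetilde{A}(k)-\widetilde{A}(k-1)$, a step that (as in Theorem \ref{thm_general}) involves contiguous-relation simplifications; you instead evaluate the two double sums independently to the same closed form, so \eqref{eq_general2a} and \eqref{eq_general2b} drop out simultaneously with no difference computation, at the cost of running the reduction twice and of a (harmless) polynomiality-in-$a$ argument to remove the genericity assumption. Second, for \eqref{eq_general2c} the paper keeps the generic-$a$ series and recovers the missing $\ell=b-a$ term as a $0/0$ limit via L'H\^opital, the digammas arising from differentiated Gamma factors; you isolate $\ell=b-a$ \emph{before} invoking \eqref{eq_ratio} and evaluate its inner sum directly as $\sum_{k=0}^{n}1/(k+b+1)=\psi(n+b+2)-\psi(b+1)$, which is arguably cleaner since no limit is taken and the derivation is valid at $a=b-m$ without continuation in $a$. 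The one point you should make explicit in a full write-up is the justification of the summation swap (absolute convergence holds for $|c+1|>1$, with general $c$ recovered by analytic continuation, or for \eqref{eq_general2a} by noting both sides are Laurent polynomials in $c$)\,---\,though the paper is equally silent on this.
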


\begin{proof}
Equation \eqref{eq_general2a} is proven in the same way as \eqref{eq_general}, so we only sketch the proof. Denoting the left hand side of \eqref{eq_general2a} by $\widetilde{A}(n)$ and following the same procedure, we obtain
\begin{equation} 
\widetilde{A}(k) =  \sum_{\ell=0}^\infty \frac{c}{(a-b+\ell)(c+1)^{\ell+1}}\bigg( \frac{\Gamma(k+a+2+\ell)\Gamma(b+1)}{\Gamma(k+b+2)\Gamma(a+1+\ell)}-1\bigg). \label{eq_Ak2new}
\end{equation}
Using this, we can readily compute $\widetilde{A}(k) -\widetilde{A}(k-1)$, which, upon simplification, leads to \eqref{eq_general2a}. We skip the details as they are very similar to the proof of Theorem \ref{thm_general}.

When $a-b$ is not a negative integer or $0$, equation \eqref{eq_Ak2new} can be split up into two sums, each expressible as a hypergeometric function due to \eqref{2f1def} and \eqref{3f2def}; the result is \eqref{eq_general2b}.

When $a-b$ is a negative integer or $0$, then the sum in \eqref{eq_Ak2new} is understood in the sense of
\[ \sum_{\ell = 0}^{b-a-1} \quad + \quad \sum_{\ell = b-a+1}^{\infty}, \]
plus the missing $\ell = b-a$ term, which is computed using
\[ \lim_{\ell \to b-a} \frac{c}{(a-b+\ell)(c+1)^{\ell+1}}\bigg( \frac{\Gamma(k+a+2+\ell)\Gamma(b+1)}{\Gamma(k+b+2)\Gamma(a+1+\ell)}-1\bigg). \]
We can evaluate the limit using L'H\^opital's rule; the digamma terms come from differentiation. By combining all the pieces, \eqref{eq_general2c} follows.
\end{proof}

\begin{remark} The sum on the right hand side of equation \eqref{eq_general2c} can be written as a combination of hypergeometric functions (cf.~Remark \ref{rem_fast}). Hence, by specializing $a$ and $b$ as functions of $n$, Theorem \ref{thm_general2} gives an essentially hypergeometric evaluation of 
\[ \sum_{j=0}^{n} \sum_{i=0}^j \frac{\binom{f_1(n)}{i}}{\binom{f_2(n)}{j}}\,c^{i-j}, \] 
where $f_1$ and $f_2$ can be any functions of $n$ provided that $f_2(n)\ge n$ for the range of $n$ concerned. 
\end{remark}

\begin{remark}
The outer upper limit of the double sum does not need to be $n$. For instance, take $a=b = n/2$ and $c=1$ in equation \eqref{eq_general2c}, then let $n \mapsto 2n$. The result is
\[ \frac{2}{3n+1}\sum_{j=0}^{2n}\sum_{i=0}^j \frac{\binom{3n+1}{i}}{\binom{3n}{j}} = \psi(3n+2)-\psi(n+1)-\log(2)+\frac{3n+2}{2n+2}\,_3F_2\big(1,1,3n+3;2,n+2;\tfrac12\big).\]
The expression on the left hand side is found in the expected time to absorption of the Mabinogion urn process when a certain control strategy is adopted \cite[Section~3]{Stenlund}, namely that the number of white balls in the urn is always kept below twice the number of black balls. Other control strategies give rise to similar expressions. 
\end{remark}

For certain combinations of $a$ and $b$, Theorems  \ref{thm_general} and \ref{thm_general2} can be  simplified. 

\begin{corollary}\label{thm_general_odd}
For any $n \in \mathbb{N}$, $a, b, c\in \mathbb{C}$ and $b \notin \mathbb{Z}^-$,
\begin{align}\label{eq_general_odd}
\sum_{j=0}^{n} \sum_{i=0}^j \frac{\binom{2n+a+2}{i}}{\binom{2n+1}{j}} & = \sum_{k=0}^n \frac{n+1}{2k+1} \frac{\binom{2k+a}{k}}{\binom{2k}{k}}, \\
\sum_{j=0}^{n} \sum_{i=0}^j \frac{\binom{2n+3}{i}}{\binom{2n+b+1}{j}} & =  \sum_{k=0}^n \frac{2n+b+2}{k+b+1} \frac{\binom{2k+1}{k}+\frac{4^k\,b}{k+1}}{\binom{2k+b+2}{k+1}},  \label{eq_general_b1} \\
\sum_{j=0}^{n} \sum_{i=0}^j \frac{\binom{n+1}{i}}{\binom{n+b}{j}}\, c^{j-i}  & =  \sum_{k=0}^n \frac{n+b+1}{b+1}\frac{(c+1)^k}{\binom{k+b+1}{k}},  \label{eq_aeq0} \\
\sum_{j=0}^{n} \sum_{i=0}^j \frac{\binom{n+2}{i}}{\binom{n}{j}}\, c^{j-i+1}  & =  \sum_{k=0}^n \frac{n+1}{k+1}\left((c+1)^{k+1}-1\right).  \label{eq_aeq1}
\end{align}
\end{corollary}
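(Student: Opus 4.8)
The plan is to obtain all four identities as specializations of Theorems \ref{thm_general} and \ref{thm_general2}, followed by elementary simplification of the resulting binomial coefficients and inner sums. Identities \eqref{eq_general_odd} and \eqref{eq_general_b1} will come from Theorem \ref{thm_general}, while \eqref{eq_aeq0} and \eqref{eq_aeq1} will come from equation \eqref{eq_general2a} of Theorem \ref{thm_general2}.

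For \eqref{eq_general_odd}, I would set $b=0$ in \eqref{eq_general}. The term carrying the factor $b$ on the right-hand side then vanishes, leaving $\frac{1}{2n+2}$ times the double sum equal to $\sum_{k=0}^n \frac{\binom{2k+a}{k}}{(k+1)\binom{2k+2}{k+1}}$. The only remaining task is to rewrite the prefactor: from $\binom{2k+2}{k+1} = \frac{2(2k+1)}{k+1}\binom{2k}{k}$ one gets $(k+1)\binom{2k+2}{k+1} = 2(2k+1)\binom{2k}{k}$, and multiplying through by $2n+2$ yields the stated form $\frac{n+1}{2k+1}\cdot\frac{\binom{2k+a}{k}}{\binom{2k}{k}}$. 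For \eqref{eq_general_b1}, I would instead set $a=1$ in \eqref{eq_general}. Here the one nontrivial ingredient is the partial row sum $\sum_{j=0}^{k-1}\binom{2k+1}{j} = 4^k - \binom{2k+1}{k}$, which follows from the symmetry $\binom{2k+1}{j} = \binom{2k+1}{2k+1-j}$ splitting $2^{2k+1}$ into two equal halves. Substituting this into the bracket of \eqref{eq_general} and placing the two terms over the common denominator $k+b+1$ collapses it to $\frac{(k+1)\binom{2k+1}{k}+b\,4^k}{k+b+1}$, and dividing by $k+1$ produces exactly the numerator $\binom{2k+1}{k}+\frac{4^k b}{k+1}$ appearing in \eqref{eq_general_b1}.

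For \eqref{eq_aeq0} and \eqref{eq_aeq1}, I would start from \eqref{eq_general2a}, whose right-hand side contains the inner sum $\sum_{j=0}^k \binom{k+a}{j}\,c^{j-k}$. Setting $a=0$ makes this a complete binomial expansion, $\sum_{j=0}^k \binom{k}{j}c^{j-k} = c^{-k}(1+c)^k$, which gives \eqref{eq_aeq0} after clearing the factor $\frac{1}{n+b+1}$ and replacing $c$ by $1/c$ to match the $c^{j-i}$ convention on the left (this substitution turns $c^{-k}(1+c)^k$ into $(c+1)^k$). Setting $a=1$, $b=0$ instead gives the inner sum $\sum_{j=0}^k \binom{k+1}{j}c^{j-k}$, which is the full expansion of $(1+c)^{k+1}$ with the single $j=k+1$ term removed, namely $c^{-k}(1+c)^{k+1} - c$; together with $\binom{k+1}{k}=k+1$ and the same $c \mapsto 1/c$ substitution (now producing the extra factor $c$ visible in the exponent $c^{j-i+1}$) this yields \eqref{eq_aeq1}.

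I expect no genuine obstacle here, since each identity is a direct specialization; the only points requiring care are the bookkeeping of the exponent convention $c^{i-j}$ versus $c^{j-i}$, handled uniformly by $c \mapsto 1/c$, and the odd-row partial sum $\sum_{j=0}^{k}\binom{2k+1}{j}=4^k$ used for \eqref{eq_general_b1}. The substitution $c \mapsto 1/c$ is valid a priori only for $c \neq 0$, but since both sides of \eqref{eq_aeq0} and \eqref{eq_aeq1} are polynomials in $c$, the resulting identities extend to all $c \in \mathbb{C}$. All four results then hold for the stated parameter ranges by the analytic continuation already invoked in the proofs of Theorems \ref{thm_general} and \ref{thm_general2}.
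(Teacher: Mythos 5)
Your proposal is correct, and for three of the four identities it takes essentially the paper's route: \eqref{eq_general_odd} via $b=0$ in \eqref{eq_general}, \eqref{eq_general_b1} via $a=1$ in \eqref{eq_general} together with the half-row sum $\sum_{j=0}^{k}\binom{2k+1}{j}=4^k$, and \eqref{eq_aeq0} via $a=0$ in \eqref{eq_general2a} plus the binomial theorem. Your intermediate checks are accurate; in particular $(k+1)\binom{2k+2}{k+1}=2(2k+1)\binom{2k}{k}$ and the collapse of the bracket in \eqref{eq_general} to $\bigl((k+1)\binom{2k+1}{k}+b\,4^k\bigr)/(k+b+1)$ both verify. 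Where you genuinely diverge is \eqref{eq_aeq1}: the paper specializes $a=1$, $b=0$ in the hypergeometric form \eqref{eq_general2b} and then needs the contiguous relation $(n+2)\,{}_3F_2(1,1,n+3;2,2;x)-(n+1)\,{}_3F_2(1,1,n+2;2,2;x)={}_2F_1(1,n+2;2;x)$ to telescope the right-hand side into elementary form, whereas you stay inside \eqref{eq_general2a} and evaluate the inner sum as a complete binomial expansion minus its top term, $\sum_{j=0}^{k}\binom{k+1}{j}c^{j-k}=c^{-k}(1+c)^{k+1}-c$; after $c\mapsto 1/c$ this indeed yields exactly $\frac{n+1}{k+1}\bigl((c+1)^{k+1}-1\bigr)$ on the right of \eqref{eq_aeq1}, as I have checked. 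Your route is more elementary---it needs no ${}_3F_2$ machinery and treats \eqref{eq_aeq0} and \eqref{eq_aeq1} uniformly as two specializations of the same identity---while the paper's route showcases the contiguous-relation technique it uses elsewhere and works directly from the closed hypergeometric form. One further merit of your write-up: the exponent bookkeeping via $c\mapsto 1/c$ is a priori restricted to $c\neq 0$, and you correctly recover $c=0$ by observing that both sides of \eqref{eq_aeq0} and \eqref{eq_aeq1} are polynomials in $c$, a point the paper leaves implicit.
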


\begin{proof} Setting $b=0$ in equation \eqref{eq_general}, the second term in the square brackets on the right hand side vanishes, and after some simple algebra we obtain \eqref{eq_general_odd}.

If we let $a=1$ in \eqref{eq_general}, then the $j$-sum simplifies due to the binomial theorem, and we obtain \eqref{eq_general_b1}. Note that by choosing different values of $a$ such as $0$ or $-1$, formulas similar to \eqref{eq_general_b1} can be obtained, but we omit the results.

To prove equation \eqref{eq_aeq0}, take $a=0$ in \eqref{eq_general2a} and apply the binomial theorem. 

To show that \eqref{eq_aeq1} is true, take $a=1$ and $b=0$ in \eqref{eq_general2b}, then manipulate the resulting right hand side into a telescoping sum, using the contiguous relation
\[ (n+2)\,_3F_2(1,1,n+3;2,2;x)-(n+1)\,_3F_2(1,1,n+2;2,2;x) = {}_2F_1(1,n+2;2;x), \]
With $x=\tfrac{c}{c+1}$, the $_2F_1$ essentially simplifies to the right hand side summand of \eqref{eq_aeq1}.
\end{proof}

In some cases, the single sums from Corollary \ref{thm_general_odd} can be written in terms of the \textit{harmonic numbers}, defined as
\begin{equation*}
H_n:=\sum_{k=1}^n \frac{1}{k}.
\end{equation*}
Recall a connection between the harmonic numbers and the digamma function appearing in \eqref{eq_general2c}: when $\alpha \in \mathbb{N}$, $\psi(\alpha+1) = H_\alpha - \gamma$, where $\gamma$ is the Euler-Mascheroni constant.

\begin{corollary}\label{cor_main}
For any $n\in\NN$,
\begin{align}\label{eq_doublesumA}
\sum_{j=0}^{n} \sum_{i=0}^j \frac{\binom{2n+2}{i}}{\binom{2n+1}{j}} & = (n+1) \sum_{k=0}^{n} \frac{1}{2k+1} = (n+1)\Big( H_{2n+1}-\frac{H_{n}}{2} \Big), \\
\label{odd_two}
\sum_{j=0}^{n} \sum_{i=0}^j \frac{\binom{2n+3}{i}}{\binom{2n+1}{j}} & = (n+1) \sum_{k=1}^{n+1} \frac{1}{k} = (n+1) H_{n+1}, \\
\label{eq_doublesumB}
\sum_{j=0}^{n} \sum_{i=0}^j \frac{\binom{2n+1}{i}}{\binom{2n}{j}} & = \frac{2^{2n-1}}{\binom{2n}{n}} + \Big(n+\frac{1}{2}\Big) \Big( H_{2n+1}-\frac{H_{n}}{2} \Big) , \\
\label{even_two}
\sum_{j=0}^{n} \sum_{i=0}^j \frac{\binom{2n+2}{i}}{\binom{2n}{j}} & = \frac{2^{2n}}{\binom{2n}{n}} + \Big(n+\frac{1}{2}\Big)H_n.
\end{align}
\end{corollary}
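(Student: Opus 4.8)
The plan is to derive all four identities from Corollary~\ref{thm_general_odd} and Theorem~\ref{thm_general}, splitting them into two pairs according to the parity of the lower binomial coefficient. The two sums with denominator $\binom{2n+1}{j}$, namely \eqref{eq_doublesumA} and \eqref{odd_two}, are immediate specializations of \eqref{eq_general_odd}. The two sums with denominator $\binom{2n}{j}$, namely \eqref{eq_doublesumB} and \eqref{even_two}, sit on the boundary value $b=-1$ that is excluded from Theorem~\ref{thm_general}, and will require a limiting argument together with an auxiliary central-binomial identity.

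For \eqref{eq_doublesumA} I would set $a=0$ in \eqref{eq_general_odd}: the ratio $\binom{2k}{k}/\binom{2k}{k}$ collapses to $1$, leaving $(n+1)\sum_{k=0}^n \tfrac{1}{2k+1}$. For \eqref{odd_two} I would set $a=1$; here $\binom{2k+1}{k}/\binom{2k}{k}=(2k+1)/(k+1)$, so the factor $1/(2k+1)$ cancels and the sum reduces to $(n+1)\sum_{k=0}^n \tfrac{1}{k+1}=(n+1)H_{n+1}$. The only remaining ingredient is the harmonic reduction $\sum_{k=0}^n \tfrac{1}{2k+1}=H_{2n+1}-\tfrac12 H_n$, obtained by separating $H_{2n+1}$ into its odd- and even-indexed terms, the even ones summing to $\tfrac12 H_n$.

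For the even-denominator pair I would apply \eqref{eq_general} with $a=0$ (to reach \eqref{even_two}) and with $a=-1$ (to reach \eqref{eq_doublesumB}), and then pass to the limit $b\to-1$. This is legitimate because the left-hand side of \eqref{eq_general} is manifestly continuous at $b=-1$, while on the right-hand side the apparent pole at $b=-1$ is cancelled by the factor $b$ multiplying $\sum_{j=0}^{k-1}\binom{2k+a}{j}$: the $k=0$ term vanishes, and for $k\ge1$ one has $b/(k+b+1)\to-1/k$. Using $\sum_{j=0}^{k-1}\binom{2k}{j}=\tfrac12\big(4^k-\binom{2k}{k}\big)$ together with the identity $(k+1)\binom{2k+1}{k+1}=(2k+1)\binom{2k}{k}$, each resulting summand splits cleanly into a harmonic piece and a central-binomial piece.

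The main obstacle, and the source of the terms $2^{2n-1}/\binom{2n}{n}$ and $2^{2n}/\binom{2n}{n}$, is then the evaluation of the central-binomial sum
\[ \sum_{k=1}^n \frac{4^k}{2k(2k+1)\binom{2k}{k}} = 1 - \frac{4^n}{(2n+1)\binom{2n}{n}}, \]
which I would prove by telescoping: setting $g(k):=4^k/\big((2k+1)\binom{2k}{k}\big)$ and using $\binom{2k}{k}=\tfrac{2(2k-1)}{k}\binom{2k-2}{k-1}$, one checks that $g(k-1)-g(k)$ equals the summand, so the sum collapses to $g(0)-g(n)$ and the constant $g(0)=1$ cancels the $k=0$ contribution. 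Feeding this back, \eqref{even_two} and \eqref{eq_doublesumB} follow after routine algebra and the same harmonic reduction as above. (Alternatively, one could route the even-denominator pair through Theorem~\ref{thm_general2}, via \eqref{eq_general2b} with $(a,b)=(n+1,n)$ and \eqref{eq_general2c} with $a=b=n$; both then hinge on the closed form ${}_3F_2\big(1,1,2n+3;2,n+2;\tfrac12\big)=2^{2n}/\big((2n+1)\binom{2n}{n}\big)+\tfrac12 H_n+\ln 2$, which is equivalent to the telescoping identity above and is where the real work lies.)
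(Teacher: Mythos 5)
Your proposal is correct and follows essentially the same route as the paper: \eqref{eq_doublesumA} and \eqref{odd_two} via $a=0$ and $a=1$ in \eqref{eq_general_odd}, and the even-denominator pair via \eqref{eq_general} with $(a,b)=(0,-1)$ and $(-1,-1)$, which is exactly the paper's choice. The only (cosmetic) differences are that you justify $b=-1$ by a continuity/limit argument where the paper re-invokes the telescoping identity \eqref{eq_A_telescoping} with the conclusion $A(k)-A(k-1)=B(k)$, and your telescoping evaluation of $\sum_{k=1}^n 4^k/\bigl(2k(2k+1)\binom{2k}{k}\bigr)$ is the elementary-binomial form of the paper's use of the duplication formula together with \eqref{eq_ratio}.
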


\begin{proof}
Equation \eqref{eq_doublesumA} follows from \eqref{eq_general_odd} with $a=0$.  Equation \eqref{odd_two} follow from \eqref{eq_general_odd} with $a=1$, or from \eqref{eq_general_b1} with $b=0$. 

For equation \eqref{even_two}, we need to choose $b=-1 \in \mathbb{Z}^-$, so \eqref{eq_general} does not immediately apply. However, we can refer to equation \eqref{eq_A_telescoping}: 
\[ A(n)  = A(0) + \sum_{k=1}^n \big( A(k)-A(k-1) \big) = \frac{1}{b+2} + \sum_{k=1}^n B(k), \]
where we have used the notation and the conclusion in the proof of  Theorem \ref{thm_general}. Then, by letting $b=-1$ and $a=0$, applying the duplication formula for the Gamma function \cite[Eq.~(1.5.1)]{AAR}, and simplifying, we get
\[ \sum_{j=0}^{n} \sum_{i=0}^j \frac{\binom{2n+2}{i}}{\binom{2n}{j}} = (2n+1) \bigg(1+\frac{H_n}{2} -\frac{\sqrt{\pi}}{4} \sum_{k=0}^n \frac{\Gamma(k)}{\Gamma(k+3/2)} \bigg). \]
The $k$-sum can be evaluated using equation \eqref{eq_ratio}, and \eqref{even_two} follows. Equation \eqref{eq_doublesumB} is proved in a very similar manner, using $b=a=-1$.
\end{proof}

\begin{remark}
By comparing equation \eqref{odd_two} against \eqref{eq_general2b} (with $a=n+2, \ b=n+1$ and $c=1$), we obtain the hypergeometric evaluation
\[ \frac{2n+1}{n+1}\,_3F_2\left(1,1,2n+2;2,n+2;\tfrac12\right) = H_n + 2\log(2). \]
\end{remark}

\begin{remark}
The double sum
\[ \sum_{j=0}^n \sum_{i=0}^j \frac{\binom{M+1}{i}}{\binom{M}{j}}\]
gives the expected transition time from $0$ white balls to $n+1$ white balls in the Ehrenfest urn model, starting with $M+1$ black balls \cite[Eq.~(8)]{Palacios}. The cases $M=2n+1$ and $M=n+1$ have been studied and simplified using integrals \cite{Blom, Lathrop}, and correspond to our formulas \eqref{eq_doublesumA} and \eqref{eq_doublesum_whole} respectively. Hypergeometric evaluations for other values of $M$ can be readily obtained by choosing the appropriate value of $a=b$ in equation \eqref{eq_general2c}.
\end{remark}

\section{Further formulas}\label{sec_gen}

\subsection{Analysis of Corollary \ref{cor_main}}

If we fix $a \in \mathbb{Z}$  in equation \eqref{eq_general_odd}, then the right hand side simplifies as a sum of a rational function in $k$. By decomposing this rational function into partial fractions, we can express the right hand side as a combination of harmonic numbers, generalizing Corollary \ref{cor_main}.

Alternatively, we can apply the procedure below to unravel a structure behind Corollary \ref{cor_main} more clearly. The elementary identity
\[ \binom{n}{k} = \binom{n-1}{k-1}+\binom{n-1}{k} \]
gives the recursion
\begin{equation*}
\sum_{i=0}^j \binom{n}{i} = \bigg(2 \sum_{i=0}^j \binom{n-1}{i}\bigg) - \binom{n-1}{j}.
\end{equation*}
When applied iteratively, it leads to the formula (for $m\in\mathbb{N}$)
\begin{equation} \label{eq_iterative}
\sum_{i=0}^j \binom{n}{i} = 2^{m} \sum_{i=0}^j \binom{n-m}{i} - \sum_{k=1}^{m} 2^{k-1} \binom{n-k}{j}.
\end{equation}
Similarly, the reciprocals of the binomial coefficients satisfy the recursion
\begin{equation*}
\frac{1}{\binom{n}{k}} = \frac{n+1}{n+2}\Bigg( \frac{1}{\binom{n+1}{k}} + \frac{1}{\binom{n+1}{k+1}} \Bigg),
\end{equation*}
which gives
\begin{equation*}
\sum_{i=0}^k \frac{1}{\binom{n}{i}} = \frac{n+1}{n+2}\left( \bigg(2 \sum_{i=0}^k \frac{1}{\binom{n+1}{i}}\bigg) + \frac{1}{\binom{n+1}{k+1}}-1 \right).
\end{equation*}
This in turns leads to the formula (for $m\in\mathbb{N}$)
\begin{equation} \label{eq_iterative_rec}
\sum_{i=0}^k \frac{1}{\binom{n}{i}} = \frac{2^{m}(n+1)}{n+m+1} \sum_{i=0}^k \frac{1}{\binom{n+m}{i}} + \sum_{j=1}^{m} \frac{2^{j-1}(n+1)}{n+j+1}\bigg( \frac{1}{\binom{n+j}{k+1}}-1\bigg).
\end{equation}
Equations \eqref{eq_iterative} and \eqref{eq_iterative_rec} allow us to shift the upper indices of the binomial coefficients in any of our results.  For instance, combining \eqref{odd_two} with \eqref{eq_iterative}, we get 
\begin{equation*}
\sum_{j=0}^{n} \sum_{i=0}^j \frac{\binom{2n+3+m}{i}}{\binom{2n+1}{j}} = 2^{m} \sum_{j=0}^{n} \sum_{i=0}^j \frac{\binom{2n+3}{i}}{\binom{2n+1}{j}} - \sum_{k=1}^{m} 2^{m-k} \sum_{j=0}^n \frac{\binom{2n+2+k}{j}}{\binom{2n+1}{j}}.
\end{equation*}
The first sum on the right simplifies due to \eqref{odd_two}, while the rightmost inner sum is evaluable using \eqref{eq_ratio}. Hence, we obtain the following equation valid for any $n, m\in\mathbb{N}$:
\begin{equation} \label{half_odd_plus}
\sum_{j=0}^{n} \sum_{i=0}^j \frac{\binom{2n+3+m}{i}}{\binom{2n+1}{j}} = 2^{m}(n+1) \Bigg( H_{n+1} - \sum_{k=1}^{m} \frac{1}{2^{k-1}k} \bigg( \frac{\binom{2n+2+k}{n+1}}{\binom{2n+2}{n+1}} - 1\bigg) \Bigg). 
\end{equation}
Likewise, using \eqref{eq_doublesumA} and \eqref{eq_iterative}, we obtain another equation valid for any $n, m\in\mathbb{N}$:
\begin{equation} \label{half_odd_minus}
\sum_{j=0}^{n} \sum_{i=0}^j \frac{\binom{2n+2-m}{i}}{\binom{2n+1}{j}} = \frac{n+1}{2^{m+1}} \Bigg( 2H_{2n+1}-H_{n} - \sum_{k=1}^{m} \frac{2^{k+1}}{k} \bigg( \frac{\binom{2n+2-k}{n+1}}{\binom{2n+2}{n+1}} -1 \bigg) \Bigg).
\end{equation}
We may produce similar identities ad nauseam. As one more example, combining \eqref{even_two} with \eqref{eq_iterative_rec} gives, for any $m\in\{0,1,\ldots,n\}$, 
\begin{multline*} 
\sum_{j=0}^{n} \sum_{i=0}^j \frac{\binom{2n+2}{i}}{\binom{2n-m}{j}} = 2^{m-1}(2n+1-m) \Bigg(H_n + \frac{2^{2n+1}}{(n+1)\binom{2n+1}{n+1}} \\
+ \sum_{k=1}^{m} \frac{1}{2^{k-1}k} \bigg(\frac{\frac{2^{2n+2}k}{2n+2-k}-\binom{2n+2}{n+1}}{2\binom{2n+1-k}{n+1}} +1 \bigg) \Bigg).
\end{multline*}

\begin{remark}
For each fixed $m \in \mathbb{Z}$, equations \eqref{half_odd_plus} and \eqref{half_odd_minus} give the following information about the sum $\displaystyle\sum_{j=0}^{n} \sum_{i=0}^j \frac{\binom{2n+m+2}{i}}{\binom{2n+1}{j}}$:

\begin{itemize}
\item The double sum simplifies as $2^{m-1}(n+1)$ times the sum of a harmonic term and a rational function in $n$.

\item The harmonic term is $H_{n+1}$ if $m$ is a positive integer, and is $2H_{2n+1}-H_{n}$ if $m$ is a non-positive integer.

\item Analysis of the rational function furnishes asymptotics for the double sum as $n \to \infty$.
\end{itemize}
\end{remark}

\subsection{Sums with an elementary approach}

It is well-known \cite[Eq.~(2.25)]{Gould1} that
\begin{equation} \label{eq_reciprocal}
\sum_{j=0}^n \frac{1}{\binom{n}{j}} = \frac{n+1}{2^{n+1}} \sum_{k=1}^{n+1} \frac{2^k}{k}.
\end{equation}
This can be converted into a double sum using the next result.

\begin{lemma} \label{thm_symmetry}
Let $n\in\NN$ and let $f$ be a function such that $f(j)=f(n-j)$ for all $j\in\{0,1,\ldots ,n\}$. Then
\begin{equation} \label{eq_doublesum_symmetry}
\sum_{j=0}^{n} \sum_{i=0}^j \binom{n+1}{i} f(j) = 2^{n} \sum_{j=0}^{n} f(j).
\end{equation}
\end{lemma}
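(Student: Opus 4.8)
The plan is to exploit the symmetry hypothesis $f(j)=f(n-j)$ together with the symmetry of the binomial coefficients $\binom{n+1}{i}=\binom{n+1}{n+1-i}$. First I would factor $f(j)$ out of the inner sum and abbreviate the partial row sum by
\[ S(j) := \sum_{i=0}^{j} \binom{n+1}{i}, \]
so that the left-hand side of \eqref{eq_doublesum_symmetry} becomes $\sum_{j=0}^{n} f(j)\,S(j)$.

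Next I would apply the change of index $j \mapsto n-j$, which permutes the set $\{0,1,\ldots,n\}$. Using the hypothesis $f(n-j)=f(j)$, this produces a second expression for the very same left-hand side, namely $\sum_{j=0}^{n} f(j)\,S(n-j)$. Adding the two representations gives
\[ 2\sum_{j=0}^{n} f(j)\,S(j) = \sum_{j=0}^{n} f(j)\,\big(S(j)+S(n-j)\big). \]

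The crux of the argument is the identity $S(j)+S(n-j)=2^{n+1}$, valid for every $j\in\{0,1,\ldots,n\}$. I would establish it by writing $S(n-j)=\sum_{i=0}^{n-j}\binom{n+1}{i}$, substituting $i\mapsto n+1-i$, and invoking $\binom{n+1}{n+1-i}=\binom{n+1}{i}$ to obtain $S(n-j)=\sum_{i=j+1}^{n+1}\binom{n+1}{i}$. Adding this to $S(j)=\sum_{i=0}^{j}\binom{n+1}{i}$ yields the complete row sum $\sum_{i=0}^{n+1}\binom{n+1}{i}=2^{n+1}$, since the two ranges $\{0,\ldots,j\}$ and $\{j+1,\ldots,n+1\}$ partition $\{0,\ldots,n+1\}$. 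Substituting $S(j)+S(n-j)=2^{n+1}$ into the displayed equation and dividing by $2$ immediately gives $\sum_{j=0}^{n} f(j)\,S(j)=2^{n}\sum_{j=0}^{n} f(j)$, which is \eqref{eq_doublesum_symmetry}.

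I expect no genuine obstacle here; the only point requiring care is the bookkeeping on the summation limits when reindexing $S(n-j)$, to ensure that the two partial sums tile the full row $\{0,1,\ldots,n+1\}$ exactly once. The role of the symmetry hypothesis on $f$ is precisely to convert the reflected partial sum $S(n-j)$ back into a weight against $f(j)$, so that the constant row sum can be factored out uniformly across all $j$.
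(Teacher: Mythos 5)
Your proof is correct and follows essentially the same route as the paper: both average the sum with its reflected copy (using $f(j)=f(n-j)$ and $\binom{n+1}{i}=\binom{n+1}{n+1-i}$) so that the two partial row sums tile the full row and the binomial theorem gives $2^{n+1}$. Isolating the identity $S(j)+S(n-j)=2^{n+1}$ is merely a cleaner packaging of the paper's in-line index manipulation, not a different argument.
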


\begin{proof}
Splitting up the left hand side sum into two equal parts, and changing the order of indices in the second one, gives 
\begin{align*}
\sum_{j=0}^{n} \sum_{i=0}^j \binom{n+1}{i} f(j) &= \frac{1}{2}\Bigg( \sum_{j=0}^{n} \sum_{i=0}^j \binom{n+1}{i} f(j) + \sum_{j=0}^{n} \sum_{i=0}^j \binom{n+1}{n+1-i} f(n-j) \Bigg) \\
&=  \frac{1}{2}\Bigg( \sum_{j=0}^{n} \sum_{i=0}^j \binom{n+1}{i} f(j) + \sum_{j=0}^{n} \sum_{i=j+1}^{n+1} \binom{n+1}{i} f(j) \Bigg) \\
&= \frac{1}{2} \sum_{j=0}^{n} f(j) \sum_{i=0}^{n+1} \binom{n+1}{i} \; = \; 2^{n} \sum_{j=0}^{n} f(j),
\end{align*}
where we have used the binomial theorem for the last step.
\end{proof}

\begin{corollary}
For any $n \in \mathbb{N}$ and any $m \in \{0,1,\ldots, n+1\}$,
\begin{equation} \label{eq_doublesum_shift}
  \sum_{j=0}^{n} \sum_{i=0}^j \frac{\binom{n+1-m}{i}}{\binom{n}{j}}  = \frac{n+1}{2^{m+1}}\bigg( \sum_{k=1}^{n+1} \frac{2^k}{k} + \sum_{\ell=1}^m \frac{2^\ell}{\ell}\bigg). 
\end{equation} 
In particular,
\begin{equation} \label{eq_doublesum_whole}
\sum_{j=0}^{n} \sum_{i=0}^j \frac{\binom{n+1}{i}}{\binom{n}{j}} = \frac{n+1}{2} \sum_{k=1}^{n+1} \frac{2^k}{k}.
\end{equation}
\end{corollary}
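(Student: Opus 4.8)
The plan is to establish the special case \eqref{eq_doublesum_whole} first and then bootstrap up to the general shift formula \eqref{eq_doublesum_shift} via the iterative identity \eqref{eq_iterative}. For \eqref{eq_doublesum_whole}, I would observe that $f(j) = 1/\binom{n}{j}$ satisfies the hypothesis of Lemma \ref{thm_symmetry}, since $\binom{n}{j} = \binom{n}{n-j}$ gives $f(j) = f(n-j)$. The lemma then applies directly with the upper binomial index $n+1$, yielding $\sum_{j=0}^n \sum_{i=0}^j \binom{n+1}{i}/\binom{n}{j} = 2^n \sum_{j=0}^n 1/\binom{n}{j}$. Substituting the known reciprocal-binomial evaluation \eqref{eq_reciprocal} collapses the $2^n$ against the $2^{n+1}$ in its denominator and produces exactly \eqref{eq_doublesum_whole}. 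Once the symmetry is spotted, this case is essentially immediate.

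For the general formula, I would first note that \eqref{eq_doublesum_whole} is precisely the $m=0$ instance of \eqref{eq_doublesum_shift}, the secondary sum $\sum_{\ell=1}^0$ being empty. To handle $m \geq 1$, I would apply the iterative identity \eqref{eq_iterative} with its upper index set to $n+1$, solving it for the inner partial sum to obtain $\sum_{i=0}^j \binom{n+1-m}{i} = 2^{-m}\big(\sum_{i=0}^j \binom{n+1}{i} + \sum_{k=1}^m 2^{k-1}\binom{n+1-k}{j}\big)$. Inserting this into the double sum and dividing by $\binom{n}{j}$ splits the left-hand side of \eqref{eq_doublesum_shift} into a main piece $2^{-m}\sum_{j}\sum_i \binom{n+1}{i}/\binom{n}{j}$, which \eqref{eq_doublesum_whole} evaluates to $\frac{n+1}{2^{m+1}}\sum_{k=1}^{n+1} 2^k/k$, and a correction piece $2^{-m}\sum_{k=1}^m 2^{k-1}\sum_{j=0}^n \binom{n+1-k}{j}/\binom{n}{j}$.

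The crux, and the one genuinely new computation, is the auxiliary single sum $S_k := \sum_{j=0}^n \binom{n+1-k}{j}/\binom{n}{j}$ for $1 \leq k \leq m \leq n+1$. I would rewrite the summand as a Gamma ratio, $\binom{n+1-k}{j}/\binom{n}{j} = \Gamma(n+2-k)\Gamma(n+1-j)/\big(\Gamma(n+1)\Gamma(n+2-k-j)\big)$, reverse the index via $j \mapsto n-j$, and recognize the resulting $j$-sum as an instance of \eqref{eq_ratio} with parameters $a=1$ and $b=1-k$. The delicate point is that the lower boundary term produced by \eqref{eq_ratio} carries a factor $1/\Gamma(1-k)$, which vanishes precisely because $k$ is a positive integer; this is exactly why the range $m \leq n+1$ (equivalently $k \geq 1$) is the natural hypothesis. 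What survives telescopes to $S_k = (n+1)/k$. Feeding this back in turns the correction piece into $\frac{n+1}{2^{m+1}}\sum_{k=1}^m 2^k/k$, and adding the two pieces gives \eqref{eq_doublesum_shift}. I expect the Gamma-ratio evaluation of $S_k$, and in particular the vanishing of the pole term, to be the main obstacle; the remainder is routine bookkeeping.
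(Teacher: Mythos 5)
Your proposal is correct and follows exactly the route of the paper's own proof, which derives \eqref{eq_doublesum_whole} from \eqref{eq_reciprocal} together with Lemma \ref{thm_symmetry}, and then obtains \eqref{eq_doublesum_shift} from \eqref{eq_iterative}, \eqref{eq_doublesum_whole} and \eqref{eq_ratio} ``after some algebra.'' Your explicit evaluation $S_k = \sum_{j=0}^{n}\binom{n+1-k}{j}/\binom{n}{j} = (n+1)/k$ via the Gamma-ratio formula \eqref{eq_ratio}, with the pole term $1/\Gamma(1-k)$ vanishing for integer $k\ge 1$, is precisely the omitted algebra (your parenthetical tying the hypothesis $m\le n+1$ to $k\ge 1$ is slightly off\,---\,that hypothesis instead keeps $\Gamma(n+2-k)$ finite\,---\,but this does not affect the argument).
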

\begin{proof}
Equation \eqref{eq_doublesum_whole} follows immediately from \eqref{eq_reciprocal} and \eqref{eq_doublesum_symmetry}.  Equation \eqref{eq_doublesum_shift} then follows from \eqref{eq_iterative}, \eqref{eq_doublesum_whole} and \eqref{eq_ratio} after some algebra.
\end{proof}

\begin{remark} \label{rem_fast}
Equation \eqref{eq_doublesum_whole} can also be obtained from \eqref{eq_aeq0} with $b=0$ and $c=1$, though the approach above is somewhat more elementary. Moreover, compared with \eqref{eq_general2c}, we get
\[ \sum_{k=1}^{n+1} \frac{2^k}{k} = \frac{n+2}{2}\,_3F_2\left(1,1,n+3;2,2;\tfrac12\right)+H_{n+1}-\log(2), \]
so the right hand side of \eqref{eq_doublesum_shift} can be written in terms of (fast converging) hypergeometric functions. 
\end{remark}

\subsection{A sum with many equivalences}

We find the sum below noteworthy as it admits many equivalent expressions.

\begin{proposition}
For any $n \in \mathbb{N}$, we have
\begin{align} \nonumber 
\sum_{j=0}^{n} \sum_{i=0}^j \frac{\binom{n+1}{i}}{\binom{2n+1}{j}} & = \sum_{k=0}^{\lfloor n/2 \rfloor} (-1)^k \,\frac{\binom{n+1}{2k+1}}{\binom{n}{k}} \\
& = (n+1)\sum_{k=1}^{n+1}\frac{2^k}{k \,\binom{n+1+k}{k}} \nonumber \\
& = \frac{n+1}{2^{n+1}} \sum_{k=1}^{n+1} \frac{2^k}{k} \nonumber \\
& = \frac{1}{2^n} \sum_{j=0}^{n} \sum_{i=0}^j \frac{\binom{n+1}{i}}{\binom{n}{j}}. \label{eq_1n}
\end{align}
\end{proposition}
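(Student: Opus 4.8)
The plan is to reduce all four equalities to a single integral representation of the double sum on the left, and then read off each right-hand side by expanding one polynomial integrand in three different ways. Writing $\frac{1}{\binom{2n+1}{j}} = (2n+2)\int_0^1 t^j(1-t)^{2n+1-j}\,dt$ (the Beta-integral for a reciprocal binomial, as mentioned in the Introduction), I would pull the integral outside, swap the order of the $i$- and $j$-summations, evaluate the resulting geometric sum $\sum_{j=i}^n t^j(1-t)^{2n+1-j}$, and then collapse the $i$-sum with the binomial theorem. After the dust settles the integrand is $\frac{(1-t)^{n+1}\bigl(1-(2t)^{n+1}\bigr)}{1-2t}$; applying the substitution $t\mapsto 1-t$ and averaging the two forms makes the factor $\bigl(2t(1-t)\bigr)^{n+1}$ cancel, leaving the clean central identity
\[ \sum_{j=0}^{n} \sum_{i=0}^j \frac{\binom{n+1}{i}}{\binom{2n+1}{j}} = (n+1)\int_0^1 \frac{(1-t)^{n+1}-t^{n+1}}{1-2t}\,dt. \]
Note the integrand is a genuine polynomial, since its numerator vanishes at $t=\tfrac12$, so every term-by-term integration below is legitimate.

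From this representation each right-hand side follows by expanding the integrand and integrating against the Beta function. First, the finite geometric series $\frac{(1-t)^{n+1}-t^{n+1}}{1-2t} = \sum_{j=0}^n t^j(1-t)^{n-j}$ integrates term by term to $\sum_{j=0}^n 1/\binom{n}{j}$; equation \eqref{eq_reciprocal} then turns this into $\frac{n+1}{2^{n+1}}\sum_{k=1}^{n+1} 2^k/k$, the third line of the Proposition, and \eqref{eq_doublesum_whole} identifies that with the last line \eqref{eq_1n}. Second, expanding instead the pre-symmetrized integrand via $\frac{1-(2t)^{n+1}}{1-2t} = \sum_{\ell=0}^n (2t)^\ell$ and integrating gives $\frac{2n+2}{n+2}\sum_{\ell=0}^n 2^\ell/\binom{\ell+n+2}{\ell}$; a shift $\ell\mapsto k-1$ together with $\binom{n+2+\ell}{\ell+1} = \frac{n+2}{\ell+1}\binom{n+2+\ell}{\ell}$ rewrites this as the second line $(n+1)\sum_{k=1}^{n+1} 2^k/\bigl(k\binom{n+1+k}{k}\bigr)$ (alternatively this line is immediate from \eqref{eq_aeq0} with $b=n+1$ and $c=1$).

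The remaining and least routine equality is the alternating sum on the first line. Here I would expand the integrand in powers of $u = t(1-t)$ using the Fibonacci/Chebyshev-type polynomial identity $\frac{(1-t)^{n+1}-t^{n+1}}{1-2t} = \sum_{k=0}^{\lfloor n/2\rfloor} (-1)^k \binom{n-k}{k} u^k$, which follows by induction from the recurrence $p_n = p_{n-1}-u\,p_{n-2}$ satisfied by $p_n := \frac{\beta^{n+1}-\alpha^{n+1}}{\beta-\alpha}$ with $\alpha+\beta=1$ and $\alpha\beta=u$. Integrating term by term with $\int_0^1 \bigl(t(1-t)\bigr)^k\,dt = \frac{1}{(2k+1)\binom{2k}{k}}$ yields $\sum_{k=0}^{\lfloor n/2\rfloor}(-1)^k \frac{(n+1)\binom{n-k}{k}}{(2k+1)\binom{2k}{k}}$, and a short factorial computation confirms $\frac{(n+1)\binom{n-k}{k}}{(2k+1)\binom{2k}{k}} = \frac{\binom{n+1}{2k+1}}{\binom{n}{k}}$, giving the first line. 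The main obstacle I anticipate is precisely this branch: one must recognize that the correct expansion variable is $t(1-t)$ rather than $t$, recall (or re-derive) the Fibonacci-polynomial coefficients, and carry out the factorial simplification. By contrast, the symmetrization that produces the central identity, while the pivotal trick, is mechanical once attempted.
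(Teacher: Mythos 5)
Your proposal is correct, and it takes a genuinely different route from the paper. I verified the key computations: the Beta-integral representation $\binom{2n+1}{j}^{-1}=(2n+2)\int_0^1 t^j(1-t)^{2n+1-j}\,dt$, the geometric $j$-sum and binomial collapse yielding the integrand $(1-t)^{n+1}\bigl(1-(2t)^{n+1}\bigr)/(1-2t)$, the $t\mapsto 1-t$ averaging that cancels $\bigl(2t(1-t)\bigr)^{n+1}$, the Fibonacci-polynomial expansion $\sum_{k}(-1)^k\binom{n-k}{k}\bigl(t(1-t)\bigr)^k$ with its induction, and the factorial identity $\frac{(n+1)\binom{n-k}{k}}{(2k+1)\binom{2k}{k}}=\frac{\binom{n+1}{2k+1}}{\binom{n}{k}}$ — all check out, and since every integrand is a polynomial (the numerators vanish at $t=\tfrac12$), the term-by-term integrations are legitimate. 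The paper instead proves each equality by specializing its hypergeometric machinery: the first line from \eqref{eq_general_odd} with $a=-n-1$ (simplified via \eqref{binom_neg}), the second from \eqref{eq_aeq0} with $b=n+1$ and $c=1$, the third by deriving the recurrence $2C(k)-C(k-1)=\tfrac{2}{k+1}$ with the telescoping technique of Theorem \ref{thm_general} and summing it against $2^{k-1}$, and the fourth by comparison with \eqref{eq_doublesum_whole}. Your approach compresses all of this into one integral representation, $(n+1)\int_0^1\frac{(1-t)^{n+1}-t^{n+1}}{1-2t}\,dt$, expanded three ways; your symmetrization is the integral-side analogue of the reflection argument in Lemma \ref{thm_symmetry}. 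What your route buys: it is self-contained and elementary (no hypergeometric contiguous relations or recurrences needed, in particular none for the third line), and it exposes the clean intermediate fact that the double sum equals $\sum_{j=0}^n\binom{n}{j}^{-1}$, after which \eqref{eq_reciprocal} finishes. What the paper's route buys: given Theorems \ref{thm_general} and \ref{thm_general2}, each equality is a one-line specialization, and the same machinery generates the many sibling identities of Section \ref{sec_gen}, whereas your integral trick is tailored to this particular sum. Note that, like the paper, you still import \eqref{eq_reciprocal} and \eqref{eq_doublesum_whole} for the last two lines, so your proof is no less dependent on prior results there — but that matches the paper's own structure exactly.
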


\begin{proof}
To show the first equality in equation \eqref{eq_1n}, we set $a = -n-1$ in \eqref{eq_general_odd}, then simplify the resulting binomial coefficients with help from \eqref{binom_neg}. To establish the second equality, pick $b=n+1$ and $c=1$ in \eqref{eq_aeq0}.

For the third equality in \eqref{eq_1n}, define 
\[ C(n) : =  \frac{1}{n+1}\sum_{j=0}^{n} \sum_{i=0}^j \frac{\binom{n+1}{i}}{\binom{2n+1}{j}}. \] 
Then, using the same technique as the proof of Theorem \ref{thm_general}, we find that
\begin{equation} \label{ck_rec}
 2C(k)-C(k-1) = \frac{2}{k+1}. 
\end{equation}
Multiplying both sides of \eqref{ck_rec} by $2^{k-1}$ and summing from $k=1$ to $n$, the sum telescopes and gives
\[ 2^n C(n) - 1 = \sum_{k=1}^n \frac{2^k}{k+1}. \]
Rearranging gives the desired equality.  The final equality follows by comparison with \eqref{eq_doublesum_whole}.
\end{proof}

\begin{remark}
Many other similar sums have alternative expressions. For instance, we have
\begin{equation} \frac{1}{n+1}\sum_{j=0}^{n} \sum_{i=0}^j \frac{\binom{3n+2}{i}}{\binom{2n+1}{j}} = 2^n \Bigg(\frac34+\sum_{k=1}^{n-1}\frac{1}{2^{k}\,k}\bigg(1-\frac{5k+12}{8k+12}\frac{\binom{3k+2}{k}}{\binom{2k+1}{k}}\bigg)\Bigg). \label{eq_3n}
\end{equation}
This can be proved by denoting the left hand side as $D(n)$, showing that
\[ D(k+1)-2D(k) = \frac{2}{k}-\frac{(5k+12)}{2k(2k+3)}\frac{\binom{3k+2}{k}}{\binom{2k+1}{k}} \]
using the same  technique as the proof of Theorem \ref{thm_general}, then solving the recurrence. The right hand side of \eqref{eq_3n} is quite different from that obtained by setting $a=n$ in \eqref{eq_general_odd}.
\end{remark}

\subsection{Analogous identities with alternating sums}

We also consider some alternating versions of our double sums, which are in fact much easier. It is routine to prove by induction that
\begin{align} \label{eq_alternate1}
\sum_{i=0}^j  \binom{n}{i} (-1)^i & = (-1)^j \binom{n-1}{j}, \\  
\sum_{j=i}^m \frac{(-1)^j}{\binom{n}{j}} & = \frac{n+1}{n+2}\bigg(\frac{(-1)^m}{\binom{n+1}{m+1}}+\frac{(-1)^i}{\binom{n+1}{i}}\bigg).  \label{eq_alternate2}
\end{align}
It  immediately follows from \eqref{eq_alternate1}, for instance, that
\[ \sum_{j=0}^n\sum_{i=0}^j  \frac{\binom{n+a+1}{i}}{\binom{n+a}{j}} (-1)^i = \frac{(-1)^n+1}{2},\]
which is valid for any $a \in \mathbb{C}\backslash\mathbb{Z}^-$; compare this with equations \eqref{eq_doublesumA} and \eqref{eq_doublesum_whole}. An example of a shifted sum obtained the same way is
\[ \sum_{j=0}^n\sum_{i=0}^j  \frac{\binom{n+2}{i}}{\binom{n}{j}} (-1)^{n-i} =(n+1)\Big(H_{n+1}-H_{\lfloor\frac{n+1}{2}\rfloor}\Big),\]
compare with \eqref{eq_aeq1} with $c=-2$:
\[ \sum_{j=0}^n\sum_{i=0}^j  \frac{\binom{n+2}{i}}{\binom{n}{j}} (-2)^{j-i} =(n+1)\Big(H_{n+1}-\frac12 H_{\lfloor\frac{n+1}{2}\rfloor}\Big). \]
If we combine equations \eqref{eq_alternate1} and \eqref{eq_ratio}, then we deduce that
\begin{equation} \label{gen_alternate}
 \sum_{j=0}^n\sum_{i=0}^j \frac{\binom{n+a+1}{i}}{\binom{n+b}{j}} (-1)^{i-j} = \frac{1}{a-b-1}\bigg(\frac{\Gamma(n+a+1)\Gamma(b+1)}{\Gamma(n+b+1)\Gamma(a)}-n-b-1\bigg),
\end{equation}
which complements equations \eqref{eq_general2b} and \eqref{eq_general2c} as $c \ne -1$ there. Equation \eqref{gen_alternate} simplifies to $n+1$ when $a=b$. When $a=b+1$, the equality in \eqref{gen_alternate} is understood as a limit, so by L'H\^opital's rule,
\[  \sum_{j=0}^n\sum_{i=0}^j \frac{\binom{n+b+2}{i}}{\binom{n+b}{j}} (-1)^{i-j} = (n+b+1)\big(\psi(n+b+2)-\psi(b+1)\big). \]
This can be compared with equations \eqref{eq_aeq1} and \eqref{odd_two}.

Finally, by changing the order of summation and applying \eqref{eq_alternate2}, we produce
\begin{align*}
\sum_{j=0}^n\sum_{i=0}^j  \frac{\binom{n+1}{i}}{\binom{n}{j}} (-1)^j & = \frac{n+1}{2n+4}\big(1+(-1)^n(2^{n+2}-1)\big), \\
\sum_{j=0}^n\sum_{i=0}^j  \frac{\binom{2n+1}{i}}{\binom{2n}{j}} (-1)^j & = \frac{(-1)^n\,2^{2n-1}}{\binom{2n}{n}}+\frac{2n+1}{n+1}\frac{(-1)^n+1}{4}.
\end{align*}
Compare these results with equations \eqref{eq_doublesum_whole} and \eqref{eq_doublesumB}, respectively.

\section{Acknowledgments}

The authors are grateful to Professor Christophe Vignat for his help and for putting us in contact with each other. We are also grateful to Professor Paavo Salminen for his comments and suggestions for improvement.

\hrule \bigskip

\noindent 2010 \textit{Mathematics Subject Classification}:~Primary 05A10, Secondary 11B65, 33B15, 33C05.  \bigskip

\noindent \textit{Keywords}: binomial sum, hypergeometric function, harmonic number, Mabinogion urn, Ehrenfest urn 

\end{document}